    \renewcommand{\leq}{\leqslant}
\theoremstyle{plain}
\newtheorem{thm}{Theorem}[section]
\newtheorem{dfn}[thm]{Definition}
\newtheorem{prop}[thm]{Proposition}
\newtheorem{cor}[thm]{Corollary}
\newtheorem{ex}[thm]{Example}
\newtheorem{conge}[thm]{Conjecture}
\newtheorem{prob}[thm]{Problem}
\newtheorem*{thmintro}{Theorem}
\theoremstyle{remark}
\newtheorem{oss}[thm]{Remark}
\DeclareMathOperator{\End}{End}
\DeclareMathOperator{\N}{\mathbb{N}}
\def\<{\leqslant_{L^k}}
\def\X{\leqslant_{L^X}}
\DeclareMathOperator{\var}{\vartriangleleft}
\title{ The intermediate orders of a Coxeter group}
\author{}
\author{Angela Carnevale\thanks{School of Mathematical and Statistical Sciences, National University of  Ireland, Galway. \href{mailto:angela.carnevale@nuigalway.ie}{angela.carnevale@nuigalway.ie} }, Matthew Dyer\thanks{Department of Mathematics, University of Notre Dame. \href{mailto:dyer@nd.edu}{dyer@nd.edu}} \ and Paolo Sentinelli\thanks{ Dipartimento di Matematica, Politecnico di Milano, Milan, Italy. \\ \href{mailto:paolosentinelli@gmail.com}{paolosentinelli@gmail.com}}}
\date{}
\begin{document}
\maketitle

\vspace{-4em}

\begin{abstract}
  We define a class of partial orders on a Coxeter group associated
  with sets of reflections. In special cases, these lie between the
  left weak order and the Bruhat order. We prove that these posets are
  graded by the length function and that the projections on the right
  parabolic quotients are always order preserving. We also introduce
  the notion of $k$-Bruhat graph, $k$-absolute length and $k$-absolute
  order, proposing some related conjectures and problems.
\end{abstract}

\section{Introduction}\label{intro}

The weak order and the Bruhat order of a Coxeter group are partial
orders of preeminent importance in wide parts of algebraic
combinatorics, representation theory and algebraic geometry. These
orders depend on the Coxeter presentation of the group and the Bruhat
order is a refinement of the weak order, once a presentation is
chosen.  Both orders are graded by the length function of the
group. The weak order is a complete meet-semilattice (and an
orthocomplemented lattice in the finite case) and the order complex of
its open intervals are homotopy equivalent to spheres or are
contractible. On the other hand, the Bruhat order is Eulerian and its
open intervals are shellable.  See \cite[Ch.~2 and 3]{BB} and
references therein for these and other properties.

In this article we introduce a new class of partial orders on any
Coxeter group associated with sets of reflections. For the sake of
simplicity, we illustrate here a special case of particular
interest. For each $k\in \N$ we define an order $\<$ on a Coxeter
group $W$ associated with the set of reflections whose length is
bounded (in a way that depends on $k$). If $a < b$ then
$(W,\leqslant_{L^b})$ is a refinement of $(W,\leqslant_{L^a})$, for
all $a,b \in \N$. Moreover, $\leqslant_{L^0}=\leqslant_L$ is the left
weak order on $W$. If $\leqslant$ is the Bruhat order and $k\in \N$,
we obtain a sequence of injective poset morphisms
$$(W,\leqslant_L) \hookrightarrow (W,\leqslant_{L^1}) \hookrightarrow \ldots  \hookrightarrow (W,\leqslant_{L^k}) \hookrightarrow (W,\leqslant).$$
For this reason we call the new orders in this special case \emph{$k$-intermediate orders}; cf.\ Definition \ref{def ordini}.

Our first main result pertaining to these orders is the following. We
will prove it as a consequence of a more general result; see Theorem
\ref{corollario graduato} and Corollary~\ref{cor:intermediate}.
\begin{thmintro}
  The poset $(W,\<)$ is graded by the Coxeter length for all $k\in
  \N$. \end{thmintro}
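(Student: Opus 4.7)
The plan is to reduce to the more general Theorem~\ref{corollario graduato}, which addresses gradedness of the orders $\X$ for appropriate sets $X$ of reflections, with the corollary for $\<$ following by specialization. My overall strategy is to verify the two axioms of a graded poset for $(W, \X)$: (i) the Coxeter length $\ell$ is strictly order-preserving, and (ii) every covering relation jumps length by exactly one. From the description of $\X$ as (the reflexive-transitive closure of) length-increasing right multiplications by reflections in $X$, property (i) is immediate, and antisymmetry of $\X$ is a consequence.

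The substance is in (ii). Suppose $u \X v$ with $\ell(v) - \ell(u) \geq 2$; I must exhibit a third element $w$ with $u \X w \X v$. Chasing through the defining chain, I may reduce to handling a single step $v = ut$ with $t \in X$ and $d := \ell(v) - \ell(u) \geq 3$ (recall that right multiplication by a reflection changes $\ell$ by an odd integer). The Strong Exchange Property in Bruhat order then furnishes reflections $s_1, \ldots, s_d$ with $v = u s_1 \cdots s_d$ and $\ell(u s_1 \cdots s_i) = \ell(u) + i$ for every $i$; equivalently, $u$ and $v$ are joined by a saturated Bruhat chain of length $d$. The central task is to ensure that each $s_i$ can be chosen inside $X$, for then $w := u s_1$ is an intermediate element of the desired form and a simple induction on $\ell(v) - \ell(u)$ closes the argument.

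The main obstacle, and the core of the general theorem, is isolating and exploiting the closure property on $X$ that keeps such a Bruhat refinement inside $X$. I would expect Theorem~\ref{corollario graduato} to formulate this condition explicitly (likely as a closure requirement on the set of roots associated with $X$, or on the reflection subgroups it generates), and the corollary for the $k$-intermediate order to be obtained by checking the condition for the length-bounded reflection sets defining $\<$. Concretely, one must show that the reflections appearing in a saturated Bruhat factorization of a length-bounded reflection $t$ through $u$ are themselves length-bounded; this is the real combinatorial content of the statement, and I anticipate it follows from a careful analysis of the Bruhat graph together with the interaction between reflection length and root geometry. With this property in hand, both the general theorem and its intermediate-order corollary follow from the reduction sketched above.
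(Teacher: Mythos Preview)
Your overall framework matches the paper's: reduce gradedness to showing that a covering relation $u\vartriangleleft_X v$ forces $\ell(v)-\ell(u)=1$, and for this it suffices to produce a strict intermediate whenever $v=tu$ with $t\in X$ and $\ell(v)-\ell(u)\geq 3$. (A small slip: the order $\X$ is defined via \emph{left} multiplication by reflections, so the single-step case is $v=tu$, not $v=ut$; this does not affect the structure of your argument.)

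The genuine gap is the step you yourself flag as ``the real combinatorial content'' and then leave to anticipation. Saying that the Strong Exchange Property gives a saturated Bruhat chain from $u$ to $v$ is fine, but there is no reason an \emph{arbitrary} such chain should have all its labelling reflections in $X$, and you provide no argument that some chain does. The paper does not attempt this either. Instead, it introduces a partial order $\sqsubseteq$ on $T$ built from dihedral reflection subgroups, and proves the theorem for any order ideal $X$ of $(T,\sqsubseteq)$. The key mechanism is a result of Dyer: if $\ell(tu)>\ell(u)+1$, there is a dihedral reflection subgroup $W'\ni t$ and a path of length $>1$ from $u$ to $tu$ in the Bruhat graph restricted to the coset $W'u$; the reflections labelling that path all satisfy $t_i\sqsubset t$, hence lie in $X$ by the ideal hypothesis. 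This produces the required intermediate elements. The specialization to $\leqslant_{L^k}$ is then the observation that $T_k$ is a Bruhat order ideal of $T$, hence \emph{a fortiori} a $\sqsubseteq$-ideal, and contains $S$.

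In short, you have correctly isolated where the difficulty lies, but the dihedral-subgroup argument (and the auxiliary order $\sqsubseteq$ that encodes exactly the closure condition you were looking for) is the missing idea; ``root geometry'' is too vague a placeholder for it.
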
  Consider the function $P^J: W \rightarrow W$
which assigns to an element $w\in W$ the representative of minimal
length of the coset $wW_J$, where $W_J\subseteq W$ is the parabolic
subgroup generated by $J$. Our second main result for $k$-intermediate orders is the following (cf.\ Theorem~\ref{ordine preservato}).
\begin{thmintro}
  The functions $P^J$ are order preserving on $(W,\<)$ for all $k\in\mathbb N$.
  \end{thmintro}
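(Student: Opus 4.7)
The strategy is to reduce the claim to an analysis of covering relations in $(W,\<)$ and then to examine how a single cover projects under $P^J$.

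By the preceding theorem, $(W,\<)$ is graded by the Coxeter length $\ell$, so every relation $u\< v$ is the transitive closure of a chain of length-one covers. It suffices therefore to show that for every cover $u\lessdot v$ in $(W,\<)$, one has $P^J(u)\< P^J(v)$. Such a cover has the form $v=tu$ with $t\in L^k$ and $\ell(v)=\ell(u)+1$, so $u\lessdot v$ is in particular a cover in the Bruhat order.

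Writing the parabolic factorizations $u=u^J u_J$ and $v=v^J v_J$ (with $u^J,v^J$ the minimal coset representatives, $u_J,v_J\in W_J$, and lengths additive), set $x:=P^J(u)=u^J$ and $y:=P^J(v)=v^J$. If $uW_J=vW_J$ then $x=y$ and there is nothing to prove. Otherwise, since $P^J$ preserves the Bruhat order, one has $x<y$ in the Bruhat order, and the identity $\ell(v)=\ell(u)+1$ combined with length-additivity of the parabolic decompositions yields $\ell(y)-\ell(x)=\ell(u_J)-\ell(v_J)+1$. In contrast with the case of the weak order, this Bruhat gap can exceed $1$ as soon as $\ell(t)>1$, so one must exhibit an entire chain of covers in $(W,\<)$ from $x$ to $y$, not merely a single one.

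The main obstacle is the construction of such a chain. I would proceed by analysing the parabolic decomposition $tx=y\cdot z$ with $z\in W_J$ and absorbing the simple reflection factors of $z$ one at a time into $t$ by conjugation, obtaining at each step a new reflection $t'$ that effects a cover in $(W,\<)$. The decisive point is the auxiliary statement that each conjugated reflection still belongs to $L^k$; equivalently, that the Coxeter length of the associated reflection does not grow under these specific parabolic conjugations. I expect this to follow from a direct root-system computation, exploiting that moving the positive root of the reflection across a simple reflection of $W_J$ can only preserve or decrease the Coxeter length of the corresponding reflection. Once this lemma is established, concatenating the single-step covers yields the required chain $x=x_0\lessdot x_1\lessdot\cdots\lessdot x_j=y$ in $(W,\<)$, completing the proof. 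The technical heart of the argument lies in verifying this invariance of $L^k$ under the relevant parabolic conjugations, rather than in the reduction to covers.
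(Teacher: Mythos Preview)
Your reduction to covers via the gradedness theorem is correct, and the parabolic set-up is fine. The gap is in the heart of the argument.

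First, the proposed chain construction is not well specified. From $tx=yz$ with $z\in W_J$, you say you will ``absorb the simple reflection factors of $z$ one at a time into $t$ by conjugation''. But $t$ acts on the left of $x$ while $z$ acts on the right, so passing a simple factor $s$ of $z$ across $x$ produces $t\cdot(xsx^{-1})$, a product of two reflections, not a single reflection; there is no evident way to obtain a sequence $x=x_0\lessdot x_1\lessdot\cdots\lessdot x_j=y$ of \emph{left}-reflection covers this way.

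Second, and more seriously, the key lemma you rely on is false. Conjugating a reflection by a simple reflection of $W_J$ can strictly increase its Coxeter length: for adjacent simple reflections $s,r$ one has $\ell(srs)=3>1=\ell(r)$, so $r\in T_0$ but $srs\notin T_0$. There is no general mechanism forcing the ``specific'' conjugations arising here to be length non-increasing, and the root-system heuristic you invoke (that reflecting a positive root across a simple root of $J$ cannot raise the depth) is simply not true.

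The paper proceeds quite differently. It first reduces to $|J|=1$, say $J=\{s\}$, and argues by induction on $\ell(v)$. The delicate case is $v<vs$ and $u_J=s$; there one observes that $u^J=tvs$ with $\ell(vs)-\ell(u^J)=3$, so $u^J\X vs$ via the single reflection $t\in X$. Gradedness then yields a saturated chain $u^J\vartriangleleft_X w_1\vartriangleleft_X w_2\vartriangleleft_X vs$, and one finishes by applying the already-treated case $vs<v$ and the inductive hypothesis to the shorter covers in this chain. No control on lengths of auxiliary reflections beyond membership in $X$ is ever needed; the detour through $vs$ replaces it.
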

  After setting up some notation and recalling some preliminaries in
  Section~\ref{sec:preliminaries}, we will prove these theorems as
  consequences of more general results pertaining to a broader family
  of posets; see Section~\ref{sec:intermediate}.

  Our $k$-intermediate orders are defined by considering sets of
  reflections with bounded length, as the Bruhat order is defined by
  considering the whole set of reflections. In the same spirit, we
  define in Section~\ref{sec:pbs} various other $k$-analogues of
  related objects.  In particular, we define the $k$-\emph{absolute
    orders} (see Definition \ref{def k-assoluto}). Both the
  $k$-intermediate and $k$-absolute orders coincide with the weak
  order if $k=0$. The absolute order of the symmetric group was
  introduced by T.~Brady in \cite{brady} and it is involved in the
  construction of Eilenberg-MacLane spaces for the braid groups. He
  also proved that the lattice of noncrossing partitions $NC_n$ is
  isomorphic to any of the maximal intervals in the absolute order of
  $S_n$ corresponding to Coxeter elements. The absolute order on a
  Coxeter group has been subsequently considered in several kinds of
  problems concerning shellability, Cohen-Macaulay, Sperner and
  spectral properties, among others; see e.g. \cite{Athanasiadis},
  \cite{Athanasiadis2}, \cite{sperner2}, \cite{harper},
  \cite{spectra}. Inspired by these works we formulate some problems
  and conjectures about $k$-intermediate orders, $k$-absolute orders
  and their rank function, which we call $k$-\emph{absolute length}.

  We also introduce the $k$-Bruhat graph of a Coxeter system, which
  turns out to be always locally finite and in some sense approximates
  the Bruhat graph when the group is infinite. Recent results on
  the Ricci curvature of Bruhat and Cayley graphs of Coxeter groups
  (\cite{Siconolfi-1}, \cite{Siconolfi-2}) lead to consider the Ricci
  curvature of a $k$-Bruhat graph; this problem closes the paper.

\section{Notation and preliminaries}\label{sec:preliminaries}

In this section we establish some notation and we collect some basic
results from the theory of Coxeter systems which are useful in the
sequel. The reader can consult \cite{BB} and references therein for
further details. We follow \cite[Ch.~3]{StaEC1} for notation and
terminology concerning posets.

With $\mathbb{N}$ we denote the set of non-negative integers. For
$n\in \mathbb{N}$ we let $[n]:=\Set{1,2,\dots,n}$. With $\biguplus$ we
denote the disjoint union and with $|X|$ the cardinality of a set
$X$. Given any category, $\End(O)$ denotes the set of endomorphisms of
an object $O$.

Let $(W,S)$ be a Coxeter system. That is, $W$ is a group with a
presentation given by a finite set of involutive generators $S$ and
relations encoded by a \emph{Coxeter matrix}
$m:S\times S \rightarrow \{1,2,\dots,\infty\}$ (see
\cite[Ch.~1]{BB}). A Coxeter matrix over $S$ is a symmetric matrix
which satisfies the following conditions for all $s,t\in S$:
\begin{enumerate}
  \item $m(s,t)=1$ if and only if $s=t$;
  \item $m(s,t)\in \{2,3,\dots,\infty\}$ if $s\neq t$.
\end{enumerate}  The presentation
$(W,S)$ of the group $W$ is then the following:
$$\left\{
  \begin{array}{ll}
    \mathrm{generators}: & \hbox{$S$;} \\
    \mathrm{relations}: & \hbox{$(st)^{m(s,t)}=e$,}
  \end{array}
\right.$$ for all $s,t\in S$, where $e$ denotes the identity in
$W$. The Coxeter matrix $m$ attains the value $\infty$ at $(s,t)$ to
indicate that there is no relation between the generators $s$ and $t$.
The class of words expressing an element of $W$ contains words of
minimal length; the \emph{length function}
$\ell: W \rightarrow \mathbb{N}$ assigns to an element $w\in W$ such
minimal length. The identity $e$ is represented by the empty word and
then $\ell(e)=0$. A \emph{reduced word} or \emph{reduced expression}
for an element $w\in W$ is a word of minimal length representing $w$.
The set of \emph{reflections} of $(W,S)$ is defined by
$T:=\{wsw^{-1}:w\in W, \, s\in S\}$. If $J\subseteq S$ and $v\in W$,
we let
\begin{gather*} W^J:=\Set{w\in W:\ell(w)<\ell(ws)~\forall~s\in J},
\\ {^JW}:=\Set{w\in W:\ell(w)<\ell(sw)~\forall~s\in J},
\\ D_L(v):=\Set{s\in S:\ell(sv)<\ell(v)},
\\ D_R(v):=\Set{s\in S:\ell(vs)<\ell(v)}.
\end{gather*}
With $W_J$ we denote the subgroup of $W$ generated by $J\subseteq S$;
such a group is usually called a \emph{parabolic subgroup} of $W$. In
particular, $W_S=W$ and $W_\varnothing = \Set{e}$.

Given a Coxeter system $(W,S)$, we let $\leqslant_L$ and $\leqslant$
be the \emph{left weak order} and the \emph{Bruhat order} on $W$,
respectively. The covering relations of the left weak order are
characterized as follows: $u \var v$ if and only if $\ell(u)<\ell(v)$
and $uv^{-1} \in S$.  The covering relations of the Bruhat order are
characterized as follows: $u \var v$ if and only if
$\ell(u)=\ell(v)-1$ and $uv^{-1} \in T$.  The posets $(W,\leqslant_L)$
and $(W,\leqslant)$ are graded with rank function $\ell$ and
$(W,\leqslant_L) \hookrightarrow (W,\leqslant)$.  We recall a
characterizing property of the Bruhat order, known as \emph{lifting
  property} (see \cite[Proposition~2.2.7]{BB}):
\begin{prop}[Lifting Property] \label{sollevamento} Let $v,w\in W$
  such that $v<w$ and $s\in D_L(w)\setminus D_L(v)$. Then
  $v\leqslant sw$ and $sv\leqslant w$.
\end{prop}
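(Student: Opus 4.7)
The plan is to exploit the \emph{subword characterization} of the Bruhat order (a standard consequence of the strong exchange condition, cf.\ \cite[Thm.~2.2.2]{BB}): for $u,v\in W$, one has $u\leq v$ if and only if some (equivalently, every) reduced expression for $v$ admits a subword that is a reduced expression for $u$. This is by far the most direct route to both inequalities in the statement, and I would take it as the working characterization of $\leq$ throughout the argument.

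Since $s\in D_L(w)$ we have $\ell(sw)=\ell(w)-1$, so any reduced expression $sw=s_1 s_2\cdots s_{n-1}$, with $n=\ell(w)$, extends to a reduced expression $w=s\cdot s_1 s_2\cdots s_{n-1}$. The hypothesis $v<w$ supplies a subword of $s\cdot s_1\cdots s_{n-1}$ that is a reduced expression for $v$; I would split into two cases according to whether this subword uses the initial letter $s$ or not. If it does, then $v$ itself admits a reduced expression beginning with $s$, whence $\ell(sv)=\ell(v)-1$ and $s\in D_L(v)$, contradicting $s\notin D_L(v)$. Therefore the subword lies entirely in $s_1\cdots s_{n-1}$, which is a reduced expression for $sw$, and so $v\leq sw$.

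For the second inequality I would reuse the subword found above. Writing it as $v=s_{i_1}\cdots s_{i_k}$ with $1\leq i_1<\cdots<i_k\leq n-1$, the hypothesis $s\notin D_L(v)$ yields $\ell(sv)=\ell(v)+1$, so $s\cdot s_{i_1}\cdots s_{i_k}$ is a word of length $\ell(sv)$ representing $sv$ and is therefore reduced. This word is manifestly a subword of the reduced expression $s\cdot s_1\cdots s_{n-1}$ of $w$, and the subword characterization yields $sv\leq w$.

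The only genuine subtlety is the dichotomy in the argument for $v\leq sw$, where the hypothesis $s\notin D_L(v)$ is used twice in complementary ways: it rules out a reduced subword for $v$ that starts with the initial $s$, and it simultaneously ensures that prepending $s$ to such a subword remains reduced. Once the subword property is available, the argument is routine; the only natural alternative, working directly from the covering-relation definition of $\leq$ via induction on $\ell(w)-\ell(v)$, tends to reconstruct the subword property along the way and is correspondingly more cumbersome.
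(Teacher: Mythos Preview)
Your argument is correct. The subword characterization of the Bruhat order does all the work here, and you use the hypothesis $s\notin D_L(v)$ exactly where it is needed: once to force the reduced subword for $v$ to avoid the leading $s$ (giving $v\leqslant sw$), and once to guarantee that prefixing $s$ keeps the word reduced (giving $sv\leqslant w$).

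As for the comparison: the paper does not supply its own proof of this proposition. It is stated in the preliminaries with a reference to \cite[Proposition~2.2.7]{BB} and used as a black box. Your proof is essentially the standard one found in that reference, which also proceeds via the subword property.
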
 For $J\subseteq S$, each element $w\in W$ factorizes
uniquely as $w=w^Jw_J$, where $w^J\in W^J$, $w_J\in W_J$ and
$\ell(w)=\ell(w_J)+\ell(w^J)$; see~\cite[Proposition~2.4.4]{BB}. We
consider the idempotent function $P^J:W \rightarrow W$ defined by
\begin{equation*} P^J(w)=w^J,
\end{equation*} for all $w\in W$. This function is order preserving for the Bruhat order
(see \cite[Proposition~2.5.1]{BB}). In the next section
we prove that the function $P^J$ is order preserving for a wider class of partial orders which we are going to introduce. 
In a similar way, one defines an order-preserving function $Q^J:(W,\leqslant) \rightarrow
(W,\leqslant)$ by setting $Q^J(w)={^J w}$, where $w=w'_J {^J w}$ with
$w'_J\in W_J$, $^Jw\in {^JW}$ and $\ell(w)=\ell(w'_J)+\ell(^Jw)$.

We end this section by recalling a theorem about the Bruhat graph of a
Coxeter system $(W,S)$ and the reflection subgroups of $W$. The
\emph{Bruhat graph} of $\left(W,S\right)$ is the directed graph
$\Omega_{W,S}$ whose vertex set is $W$ and such that there is an arrow
from $u$ to $v$ if and only if $uv^{-1} \in T$ and $u<v$; such an
arrow is labeled by the reflection $uv^{-1}$. If $Y \subseteq W$, we
denote by $\Omega_{W,S}(Y)$ the induced subgraph of $\Omega_{W,S}$
whose vertex set is $Y$. A subgroup $W' \subseteq W$ generated by
$X \subseteq T$ is called a \emph{reflection subgroup} of $W'$ and
$\left(W',S' \right)$ is a Coxeter system, where
$S':= \left\{t \in T: N(t) \cap W' =\{t\}\right\}$ and
$N(v):=\Set{t\in T:\ell(tv)<\ell(v)}$; see \cite{Dyer}. The following
theorem is \cite[Theorem 1.4]{Dyer2}.
\begin{thm} \label{teorema Dyer} Let $\left(W,S\right)$ be a Coxeter
  system and $W'\subseteq W$ a reflection subgroup with set of Coxeter
  generators $S'$. Then
  \begin{enumerate}
      \item $\Omega_{W',S'}=\Omega_{W,S}(W')$;
      \item for all $x\in W$ there exists $x_0 \in W'x$ such that the
        function $W' \rightarrow W'x$ given by $w \mapsto wx_0$
        induces an isomorphism of directed graphs between
        $\Omega_{W',S'}$ and $\Omega_{W,S}(W'x)$ which preserves the
        labels of the edges.
  \end{enumerate}
\end{thm}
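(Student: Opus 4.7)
My plan is to handle the two parts in sequence, with the main work concentrated in reconciling the two length functions $\ell$ on $(W,S)$ and $\ell'$ on $(W',S')$ when restricted to $W'$. Before starting, observe that the set of reflections of $(W',S')$ is easily checked to be $T\cap W'$, since every generator $s\in S'$ lies in $T$ and $W'$ is generated by reflections of $W$. Consequently the edge labels of $\Omega_{W',S'}$ and of $\Omega_{W,S}(W')$ are drawn from the same set $T\cap W'$.

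For part (1), the crux is the inversion-set identity $N'(w)=N(w)\cap W'$ for every $w\in W'$, where $N'(w)=\{t\in T\cap W' : \ell'(tw)<\ell'(w)\}$. I would prove this by induction on $\ell'(w)$: the base case $w=e$ is vacuous, and for the step $w=sw'$ with $s\in S'$ and $\ell'(w)=\ell'(w')+1$, the defining property $N(s)\cap W'=\{s\}$ of elements of $S'$ forces the update rule for $N(\cdot)\cap W'$ under left multiplication by $s$ to coincide with the standard update rule for $N'$. Granted this, for $u,v\in W'$ with $t:=uv^{-1}\in T\cap W'$ we have
\[ u<v \ \iff \ \ell(u)<\ell(v) \ \iff \ t\in N(v) \ \iff \ t\in N'(v) \ \iff \ u<'v, \]
using that two elements of $W$ related by a reflection are Bruhat-comparable in the direction dictated by their lengths. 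This identifies edges and labels simultaneously, proving (1).

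For part (2), I would take $x_0$ to be the unique element of minimal $\ell$-length in the right coset $W'x$. Its minimality forces $N(x_0)\cap W'=\varnothing$, since any $t\in N(x_0)\cap (T\cap W')$ would yield $\ell(tx_0)<\ell(x_0)$ with $tx_0\in W'x$, contradicting minimality. From this I would prove, by induction on $\ell'(w)$, the length-additivity formula
\[ \ell(wx_0)=\ell'(w)+\ell(x_0) \qquad \text{for all } w\in W', \]
by tracking, as in part (1), how left multiplication by generators $s\in S'$ interacts with the empty-inversion property of $x_0$ to forbid any cancellation. Given this additivity, the bijection $w\mapsto wx_0\colon W'\to W'x$ preserves strict length comparisons and hence Bruhat comparisons; since $(ux_0)(vx_0)^{-1}=uv^{-1}$, edge labels are also preserved, yielding the desired directed-graph isomorphism.

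The central obstacle is the inversion-set identity $N'(w)=N(w)\cap W'$, which is the linchpin of both parts. The definition $S'=\{t\in T : N(t)\cap W'=\{t\}\}$ is tailored precisely so that this induction closes, and once it is accepted the remaining manipulations are direct applications of the strong exchange condition.
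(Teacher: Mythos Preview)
The paper does not give its own proof of this theorem; it is quoted verbatim as \cite[Theorem~1.4]{Dyer2} and used as a black box. So there is nothing to compare your argument against in this paper. I will instead comment on the correctness of your sketch.

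Your treatment of part~(1) is fine and is essentially Dyer's original argument: the identity $N'(w)=N(w)\cap W'$ is exactly what the definition of $S'$ is engineered to produce, and the equivalence of edge sets follows immediately.

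Part~(2), however, contains a genuine error. The length-additivity formula $\ell(wx_0)=\ell'(w)+\ell(x_0)$ is \emph{false} in general. Take $W=S_3$ with simple reflections $s_1,s_2$ and $W'=\langle s_1s_2s_1\rangle$. Then $S'=\{s_1s_2s_1\}$, $x_0=e$ is the minimal element of $W'$, and $\ell(s_1s_2s_1)=3$ while $\ell'(s_1s_2s_1)+\ell(x_0)=1$. The point is that an $S'$-reduced expression for $w\in W'$ need not be $S$-reduced, so your proposed induction cannot close.

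What you actually need is the inversion-set identity $N(wx_0)\cap W'=N'(w)$ for all $w\in W'$, which is strong enough to transfer Bruhat comparisons along $w\mapsto wx_0$ without ever comparing absolute lengths. This follows cleanly from the general cocycle identity $N(uv)=N(u)\,\triangle\, uN(v)u^{-1}$: intersecting with $W'$ (which is stable under conjugation by $w\in W'$) gives
\[
N(wx_0)\cap W'=(N(w)\cap W')\,\triangle\, w\bigl(N(x_0)\cap W'\bigr)w^{-1}=N'(w)\,\triangle\,\varnothing=N'(w),
\]
using part~(1) and your observation that $N(x_0)\cap W'=\varnothing$. A minor additional point: you assert that $x_0$ is the \emph{unique} element of minimal length in $W'x$; uniqueness is true but is itself a consequence of the identity just proved, so for the construction you only need existence.
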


\section{Intermediate orders}\label{sec:intermediate}

In this section we introduce the main objects of our study. 
Let $(W,S)$ be a Coxeter system and $T$ its set of reflections.
For $k\in \mathbb{N}$ we let $$T_k :=\left\{t\in T: \frac{\ell(t)-1}{2}\leqslant k\right\}.$$

The following definition introduces the notion of \emph{$k$-intermediate order}. 
\begin{dfn} \label{def ordini} Let $X\subseteq T$.
  We define a partial order $\X$ on $W$ by letting $u\X v$ if and only if
  \begin{itemize}
    \item $u=v$ or
    \item $\ell(u)<\ell(v)$ and there exist $t_1,\ldots,t_r \in X$ such that $$u<t_1u < t_2t_1u<\dots< t_r \cdots t_1u=v.$$  
    \end{itemize} For $k\in \N$ and $X=T_k$, we denote $\X$ by $\<$
    and call it a \emph{$k$-intermediate order} on $W$.
\end{dfn}

Note that $(W,\leqslant_{L^0})=(W,\leqslant_L)$ and, if $W$ is finite,
for $k$ big enough $(W,\<) = (W,\leqslant)$.  We have that
$(W,\leqslant_{L^0}) \hookrightarrow (W,\leqslant_{L^1})
\hookrightarrow \ldots \hookrightarrow (W,\leqslant)$, which justifies
the name `$k$-intermediate orders'. For $u,v \in W$ such that $u\<v$,
we denote by $[u,v]_k$ the corresponding interval in $(W,\<)$ and by
$[u,v]$ the interval in $(W,\leqslant)$.
 
 To ease the notation, we write $\vartriangleleft_X $ for a covering relation in $(W,\leqslant_{L^X})$.

\begin{figure}[htb]\centering\begin{tikzpicture}[scale=.6]

\matrix (a) [matrix of math nodes, column sep=0cm, row sep=1 cm]{
       &       &       &       &       &  4321 &     &     &     &       &     \\
       &       &       & 4312  &       &  4231 &     & 3421&     &       &     \\
       &  4132 &       & 4213  &       &  3412 &     & 2431&     & 3241  &     \\
1432   &       &  4123 &       & 2413  &       & 3142&     & 3214&       & 2341\\
       &  1423 &       & 1342  &       &  2143 &     & 3124&     & 2314  &     \\
       &       &       & 1243  &       &  1324 &     & 2134&     &       &     \\
       &       &       &       &       &  1234 &     &     &     &       &     \\};

\foreach \i/\j in {1-6/2-4, 1-6/2-6, 1-6/2-8,%
2-4/3-2,2-4/3-6,2-6/3-4, 2-6/3-8, 2-8/3-6, 2-8/3-10,%
3-2/4-1,3-2/4-3,3-4/4-3,3-4/4-5,3-6/4-7,3-8/4-5,3-8/4-11,3-10/4-9,3-10/4-11,%
4-1/5-2,4-1/5-4,4-3/5-2,4-5/5-6,4-7/5-4,4-7/5-8,4-9/5-8,4-9/5-10,4-11/5-10,%
5-2/6-4,5-4/6-6,5-6/6-4,5-6/6-8, 5-8/6-6, 5-10/6-8,%
6-4/7-6,6-6/7-6,6-8/7-6,%
5-2/4-5,5-2/6-6, 6-4/5-4, 6-6/5-10, 6-8/5-8, 5-6/4-3, 5-6/4-11, 5-10/4-5,
4-1/3-6, 4-7/3-2, 4-7/3-10, 4-9/3-6,%
3-2/2-6, 3-4/2-4, 3-8/2-8, 3-10/2-6}
    \draw (a-\i) -- (a-\j);

\end{tikzpicture} \caption{Hasse diagram of $(S_4,\leqslant_{L^1})$.} \label{fig-S4} \end{figure}
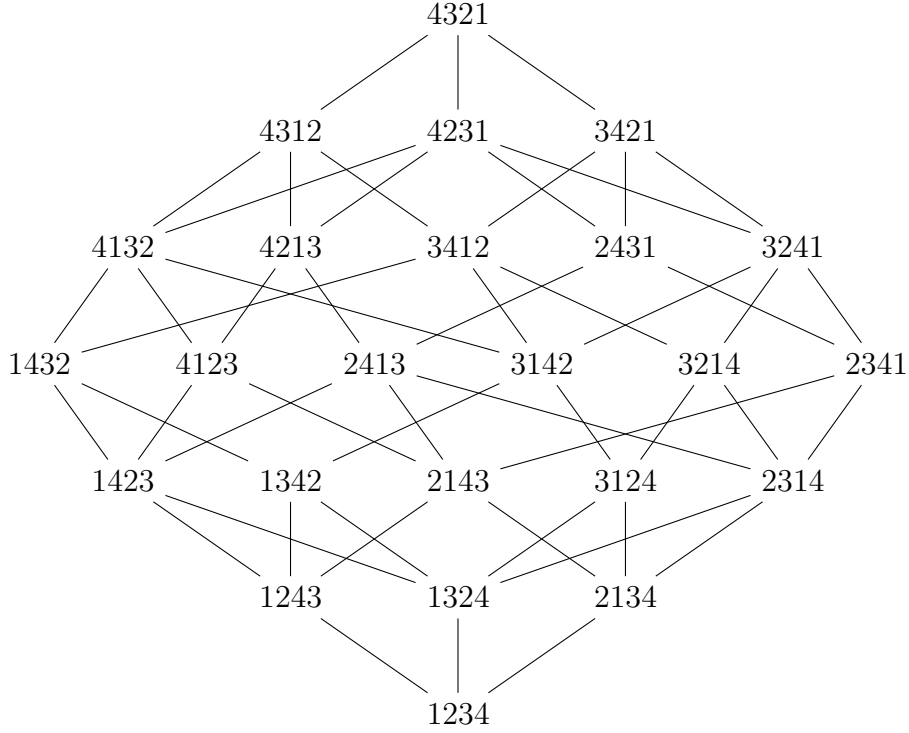
\begin{ex}
  Let $(W,S)=(S_4,\{s_1,s_2,s_3\})$, the symmetric group of order $24$
  with its standard Coxeter presentation. Then
  $(S_4,\leqslant_{L^2}) =(S_4,\leqslant)$, whose Hasse diagram is
  displayed in \cite[Figure 2.4]{BB}. The Hasse diagram of the poset
  $(S_4,\leqslant_{L^0})=(S_4,\leqslant_L) $, i.e. the Cayley graph of
  $(S_4,S)$, appears in \cite[Figure 3.2]{BB}.  The poset
  $(S_4,\leqslant_{L^1})$ is depicted in Figure \ref{fig-S4}.
\end{ex}

In order to prove the first main result of this paper, we define a
poset $\left(T,\sqsubseteq\right)$ as follows. Let $\Omega_{W,S}$ be
the Bruhat graph of $\left(W,S\right)$; we define a relation
$\sqsubseteq'$ on $T$ by setting $t \sqsubseteq' t'$ if and only if
$t=t'$ or there exists a reflection subgroup $W'\subseteq W$ such
that:
\begin{enumerate}
    \item $t,t'\in W'$;
    \item the system $(W',S')$ is dihedral, i.e. $|S'|=2$;
    \item the distance $\vec{d}(t,t')$ in $\Omega_{W,S}(W')=\Omega_{W',S'}$ is finite.
    \end{enumerate} We define $\sqsubseteq$ as the transitive closure
    of the relation $\sqsubseteq'$.  By definition,
    $\left(T,\sqsubseteq\right) \hookrightarrow
    \left(T,\leqslant\right)$ as posets, where $\leqslant$ is the
    Bruhat order. The Hasse diagrams of $(T,\sqsubseteq)$ in types $A_3$ and $B_3$ are displayed in Figure~\ref{fig:A3B3}.

\begin{figure}[htb]%
\begin{subfigure}[b]{.5\textwidth}\centering
 \begin{tikzpicture}[scale=1]
  \node (a1) at (2,2) {$s_1s_2s_3s_2s_1$};
  \node (b1) at (1,1) {$s_1s_2s_1$};
  \node (b2) at (3,1) {$s_2s_3s_2$};
  \node (c1) at (0,0) {$s_1$};
  \node (c2) at (2,0) {$s_2$};
  \node (c3) at (4,0) {$s_3$};
  \draw (a1) -- (b1) (a1) -- (b2) (b1) -- (c1) (b1) -- (c2) (b2)--(c2) (b2)--(c3);
\end{tikzpicture} \caption{ $(T,\sqsubseteq)$ in type $A_3$.}
 \end{subfigure}\begin{subfigure}[b]{.5\textwidth}
 \begin{tikzpicture}[scale=1]
  \node (a1) at (1,3) {$s_1s_0s_1s_2s_1s_0s_1$};
  \node (b1) at (0,2) {$s_0s_1s_2s_1s_0$};
  \node (b2) at (3,2) {$s_2s_1s_0s_1s_2$};
  \node (c1) at (0,1) {$s_0s_1s_0$};
  \node (c2) at (2,1) {$s_1s_0s_1$};
  \node (c3) at (4,1) {$s_1s_2s_1$};
  \node (d1) at (0,0) {$s_0$};
  \node (d2) at (2,0) {$s_1$};
  \node (d3) at (4,0) {$s_2$};
  \draw (a1) -- (b1) (b1) -- (c1) (a1) -- (c2) (b1) -- (c3) (b2)--(c2) (b2)--(c3) (c1) -- (d1) (c1) -- (d2) (c2) -- (d1) (c2) -- (d2)
  (c3) -- (d2) (c3) -- (d3);
\end{tikzpicture}\caption{$(T,\sqsubseteq)$ in type $B_3$.}\label{fig-B3}
\end{subfigure}
\caption{Examples of $(T,\sqsubseteq)$.}\label{fig:A3B3}
  \end{figure}
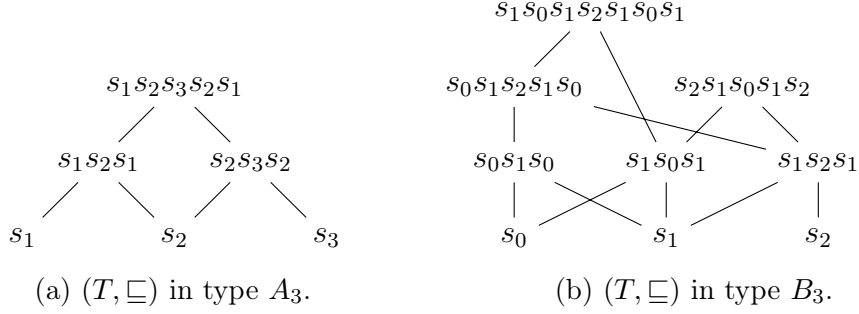

We now prove our first theorem.

\begin{thm} \label{corollario graduato} Let $(W,S)$ be a Coxeter
  system and $X \subseteq T$ be an order ideal of
  $(T,\sqsubseteq)$. Then the poset $(W,\leqslant_{L^X})$ is graded
  with rank function $\ell-\ell\circ Q^{S \cap X}$.
\end{thm}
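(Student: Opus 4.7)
The plan is to establish that $\rho := \ell - \ell \circ Q^{J}$, with $J := S \cap X$, is a rank function for $(W,\leqslant_{L^X})$; since $Q^J(e)=e$ gives $\rho(e)=0$, it suffices to verify $\rho(v)=\rho(u)+1$ for every covering relation $u \vartriangleleft_X v$.

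First I would identify the shape of covers. Unpacking Definition \ref{def ordini}, if $u<_{L^X}v$ is realised by a chain $u<t_1u<\cdots<t_r\cdots t_1u=v$ with $r\geq 2$, then $t_1u$ is a strict intermediate in $\leqslant_{L^X}$; hence every cover $u\vartriangleleft_X v$ has the form $v=tu$ for a single $t\in X$ with $u<v$ in Bruhat order.

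The heart of the proof is a splitting lemma: if $v=tu$ with $t\in X$ and $\ell(v)-\ell(u)\geq 3$, then there exists $w\in W$ with $u<_{L^X}w<_{L^X}v$, ruling out that $u\vartriangleleft_X v$ is a cover. The strategy is to select a suitable simple reflection $s$ via the Lifting Property (Proposition \ref{sollevamento}) and to work in the dihedral reflection subgroup $W':=\langle s,t\rangle$. By Theorem \ref{teorema Dyer} applied to the coset $W'u$ (which contains both $u$ and $v=tu$), the induced Bruhat graph $\Omega_{W,S}(W'u)$ is isomorphic as a labelled directed graph to $\Omega_{W',S'}$. Since the Bruhat structure of a dihedral Coxeter system is entirely explicit, one can exhibit intermediate vertices on the edge from $u$ to $v$; each such intermediate has the form $t'u$ for a reflection $t'\in T\cap W'$ with $\vec{d}(t',t)<\infty$ in $\Omega_{W',S'}$, hence $t'\sqsubseteq t$. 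Since $X$ is an order ideal, $t'\in X$, giving the required intermediate. Combining this with the previous paragraph, every cover $u\vartriangleleft_X v$ satisfies $v=tu$ with $t\in X$ and $\ell(v)=\ell(u)+1$.

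It remains to prove $\ell(Q^J(v))=\ell(Q^J(u))$ at each such cover, which follows from $Q^J(v)=Q^J(u)$, i.e.\ from $t\in W_J$. The key observation is $\mathrm{supp}(t)\subseteq J$ for every $t\in X$: each $s\in\mathrm{supp}(t)$ satisfies $s\sqsubseteq t$ by a (possibly iterated) dihedral-subgroup argument analogous to the splitting lemma, hence $s\in X\cap S=J$ by the order-ideal hypothesis, so $t\in W_J$ and $v=tu\in W_Ju$. Therefore $\rho(v)-\rho(u)=1$ for every cover, showing that $\rho$ is a rank function and hence that $(W,\leqslant_{L^X})$ is graded. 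The principal obstacle is the splitting lemma: one must carefully translate the Bruhat picture on the coset $W'u$ via Theorem \ref{teorema Dyer} and leverage the explicit description of Bruhat intervals in dihedral Coxeter systems to produce an intermediate reflection $t'\sqsubseteq t$. A related subtlety lies in the parabolic-invariance step, which again relies on iterated dihedral arguments to establish $\mathrm{supp}(t)\subseteq J$.
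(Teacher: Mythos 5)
Your outline coincides with the paper's: (i) every cover $u\vartriangleleft_X v$ is induced by a single $t\in X$ and satisfies $\ell(v)=\ell(u)+1$, proved by refining the Bruhat edge $u\to v$ inside a dihedral reflection subgroup and using that $X$ is a $\sqsubseteq$-ideal to keep the new labels in $X$; (ii) every $t\in X$ has support in $J=S\cap X$, so that the $Q^J$-term of $\rho$ is constant along covers. Your direct verification of $\rho(v)=\rho(u)+1$ on covers replaces the paper's coset decomposition $W=\biguplus_{w\in{}^JW}W_Jw$ and is a legitimate (arguably cleaner) variant; you should just also record that every minimal element of $(W,\leqslant_{L^X})$ lies in ${}^JW$ (because $J\subseteq X$), so that $\rho$ vanishes on all minimal elements and not only at $e$.

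The genuine gaps are concentrated in your ``splitting lemma'', which is precisely the step the paper does not reprove but imports from Dyer (Claim (i) in the proof of \cite[Proposition 3.3]{Dyer2}) together with an explicit dihedral computation. Three concrete problems. First, a simple reflection $s\in D_L(v)\setminus D_L(u)$ need not exist: for instance whenever $D_L(u)=S$, which occurs in infinite Coxeter groups. The usual remedy is to strip common left descents and induct on $\ell(u)$, but this replaces $t$ by a conjugate, so the dihedral subgroup one actually obtains is not $\langle s,t\rangle$ for the original data and the resulting chain must be transported back. Second, your assertion that ``each such intermediate has the form $t'u$ for a reflection $t'\in T\cap W'$'' is false as stated: in a chain $u<t_1u<t_2t_1u<\cdots$ the intermediates differ from $u$ by \emph{products} of reflections; what must lie in $X$ are the successive quotients $x_ix_{i-1}^{-1}$ (the edge labels), not the elements $x_iu^{-1}$. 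Third, even once $u$ and $v$ sit at coset rank difference at least $2$ in the dihedral coset, not every directed path from $u$ to $v$ has all labels $\sqsubset t$: in type $A_3$ with $u=e$, $t=s_1s_2s_3s_2s_1$ and $W'=\langle s_1,t\rangle$, the directed path $e\to s_1\to ts_1\to t$ in $\Omega_{W,S}(W')$ has middle label $t$ itself. One must prove (or quote) the precise dihedral fact the paper uses: if $\ell'(tx)>\ell'(x)+1$ in a dihedral system, then \emph{some} chain from $x$ to $tx$ has all labels $t_i$ with $\ell'(t_i)<\ell'(t)$, whence $t_i\sqsubset t$. Until these three points are repaired, the splitting lemma, and with it the gradedness claim, is not established.
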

\begin{proof} 
  First we prove that, if $X \subseteq T$ is an order ideal of
  $(T,\sqsubseteq)$ and $u \var_X v$, then $\ell(v)-\ell(u)=1$, for
  all $u,v\in W$.  Let $u \var_X v$; then $\ell(u)<\ell(v)$ and there
  exists $t \in X$ such that $u=tv$. Assume $\ell(u)<\ell(v)-1$. Then
  there exists a dihedral reflection subgroup $W' \subseteq W$ such
  that $t \in W'$, and a path from $v$ to $u$ in the directed graph
  $\Omega_{W,S}(W'u)=\Omega_{W,S}(W'v)$, whose length is strictly
  greater than $1$. This is Claim $(i)$ in the proof of
  \cite[Proposition 3.3]{Dyer2}. Moreover, if $(W,S)$ is a dihedral
  Coxeter system, $x\in W$ and $t \in T$ such that
  $\ell(tx)>\ell(x)+1$, then there exist $t_1,\ldots,t_n \in T$
  satisfying $x<t_1x < \ldots < t_n\cdots t_1 x= tx$ and
  $\ell(t_i)<\ell(t)$ for all $1 \leqslant i \leqslant n$.  Notice
  that, in a dihedral Coxeter system, $\ell(t')<\ell(t)$ if and only
  if $t' \sqsubset t$, for any $t',t\in T$. Hence there exists a chain
  $u=:x_0 \var x_1 \var \ldots \var x_n:=v$ in $(W,\leqslant)$ such
  that $\ell(x_{i-1}x_i^{-1})<\ell(t)$, for all $i\in [n]$; then
  $x_{i-1}x_i^{-1} \in X$ for all $i\in [n]$, since $X$ is an order
  ideal of $(T,\sqsubseteq)$. Therefore $u \var_X v$ implies
  $\ell(v)-\ell(u)=1$.

 Let $J:=S \cap X$.  If $S \subseteq X$ we have that $J=S$,
 $(\ell \circ Q^J)(w)=0$ for all $w \in W$ and, as posets,
 $(W,\leqslant_L) \hookrightarrow (W,\leqslant_{L^X})$; therefore, in
 this case, the rank function of $(W,\leqslant_{L^X})$ is $\ell$. Let
 us consider the case $S\not\subseteq X$.  First note that $s<t \in X$
 implies $s \in J$, for all $s\in S$.  To see this, first note that
 $t$ has a palindromic reduced expression (see \cite[Lemma
 2.7]{Dyer2}). Since $s<t$, either $t=wsw^{-1}$ with
 $\ell(wsw^{-1})=2l(w)+1$ for some $w\in W$, or $t=wsrsw^{-1}$ for
 some $w\in $ and $r\in T$ with $l(t)=2l(w)+2+l(r)$.  By induction on
 $\ell(w)$, it follows that $s\sqsubseteq t$ in the first case, and
 $srs\sqsubseteq t$ in the second.  We claim $s \sqsubseteq t$ in the
 second case also, For let $W':=\langle s, r \rangle$; therefore
 $s,srs \in W'$ and $s \rightarrow rs \rightarrow srs$ is a path in
 the directed graph $\Omega_{W,S}(W')$, and so
 $s \sqsubseteq srs \sqsubseteq t$.  Therefore $s<t \in X$ implies
 $s \in X$ and then $s\in J$.  So we have that $X \subseteq W_J$ and
 $X$ is an order ideal of $(W_J \cap T, \sqsubseteq)$. Since
 $X \subseteq W_J$, there is a poset isomorphism
 $\left(W_Jw,\leqslant_{L^X}\right) \simeq
 \left(W_J,\leqslant_{L^X}\right)$, for all $w\in {^JW}$. Then we
 conclude using the fact that $W= \biguplus_{w\in {^JW}} W_Jw$.
\end{proof}

\begin{oss}
  In the proof of Theorem \ref{corollario graduato} we see that, if $X \subseteq T$
  is an order ideal of $(T,\sqsubseteq)$, then
  $$(W,\leqslant_{L^X}) \simeq \biguplus_{i=1}^{|^JW|} (W_J,\leqslant_{L^X}),$$ the coproduct of $|^JW|$ copies
  of the poset $(W_J,\leqslant_{L^X})$, where $J:=S \cap X$.
\end{oss}

Clearly, the set $T_k$ is an order ideal of
$\left(T,\leqslant\right)$, and then of $\left(T,\sqsubseteq\right)$;
moreover $S \subseteq T_k$. These observations imply the following
corollary.
\begin{cor}\label{cor:intermediate}
Let $k\in \N$. Then the poset $\left(W,\<\right)$ is graded with rank function $\ell$.
\end{cor}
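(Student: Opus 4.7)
The plan is to derive the corollary as a direct specialization of Theorem~\ref{corollario graduato} with $X := T_k$. Two hypotheses must be checked: that $T_k$ is an order ideal of $(T,\sqsubseteq)$, and that the resulting rank function $\ell - \ell \circ Q^{S \cap T_k}$ collapses to $\ell$.

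For the first point, I would observe that $(T,\sqsubseteq)$ embeds into the Bruhat poset $(T,\leqslant)$, so it suffices to verify that $T_k$ is an order ideal of $(T,\leqslant)$. Since the Bruhat order on $W$ is graded by $\ell$, a relation $t' \leqslant t$ forces $\ell(t') \leqslant \ell(t)$, and the defining bound $(\ell(t)-1)/2 \leqslant k$ is therefore inherited by any Bruhat-smaller reflection. Thus $T_k$ is a Bruhat order ideal, and a fortiori a $\sqsubseteq$-order ideal. Moreover $S \subseteq T_k$ holds trivially, since $\ell(s) = 1$ gives $(\ell(s)-1)/2 = 0 \leqslant k$ for every simple reflection $s \in S$.

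With the hypotheses in hand, Theorem~\ref{corollario graduato} gives that $(W, \<)$ is graded with rank function $\ell - \ell \circ Q^{S \cap T_k} = \ell - \ell \circ Q^S$. But ${}^S W = \{e\}$, since any $w \neq e$ admits some $s \in S$ with $\ell(sw) < \ell(w)$; hence $Q^S$ is the constant function with value $e$, so $\ell \circ Q^S \equiv 0$, and the rank function reduces to $\ell$.

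There is essentially no obstacle here: all of the real content lives in Theorem~\ref{corollario graduato}, and the corollary simply records the two bookkeeping facts that make the hypotheses apply to $T_k$ and that trivialise the parabolic correction when $S \subseteq X$.
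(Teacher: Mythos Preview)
Your proof is correct and follows essentially the same route as the paper: observe that $T_k$ is a Bruhat order ideal of $T$ (hence a $\sqsubseteq$-order ideal) containing $S$, apply Theorem~\ref{corollario graduato}, and note that $S\cap T_k=S$ forces $\ell\circ Q^S\equiv 0$. The paper states these observations more tersely just before the corollary, but the content is the same.
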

Note that if $X\supseteq S$ is an order ideal of $T$, then $(W,\X)$ is
graded with rank function $\ell$, and
$(W,\leqslant_L)\hookrightarrow (W,\X)\hookrightarrow (W,\leqslant)$.

A finite graded poset is \emph{strongly Sperner} if no union of $h$
antichains is larger than the union of the $h$ largest rank levels,
for all $h\in \N$.  By a recent result of Gaetz and Gao
\cite{sperner}, the poset $(S_n,\leqslant_L)$ is strongly
Sperner. Since under the hypotheses below any antichain of
$(W,\leqslant_{L^X})$ is an antichain of $(W,\leqslant_L)$, the
following result holds for symmetric groups.

\begin{cor} \label{corollario sperner} Let $X$ be an order ideal of
  $T$ containing $S$.  Then the poset $(S_n,\leqslant_{L^{X}})$ is
  strongly Sperner. In particular, the poset $(S_n,\<)$ is strongly
  Sperner if $n>0$ and $k\in \N$. \end{cor}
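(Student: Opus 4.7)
The plan is to reduce to the Gaetz--Gao theorem \cite{sperner} that $(S_n, \leqslant_L)$ is strongly Sperner, as already suggested in the paragraph preceding the corollary. The crux is to identify antichains in $(W, \X)$ with antichains in $(W, \leqslant_L)$, and this transfer goes in the required direction because $\leqslant_L$ is a subrelation of $\X$ whenever $S \subseteq X$. Indeed, a cover $u \vartriangleleft v$ in the left weak order satisfies $uv^{-1} \in S \subseteq X$, so by the definition of $\X$ one has $u \vartriangleleft_X v$ in $(W, \X)$; taking transitive closures, $u \leqslant_L v$ implies $u \X v$. Comparability in $\leqslant_L$ is then forced by comparability in $\X$, so every antichain of $(W, \X)$ is automatically an antichain of $(W, \leqslant_L)$.

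Next I would check that the two posets share the same rank decomposition. Since $X$ is an order ideal of $(T, \leqslant)$ and $\sqsubseteq$ is a subrelation of the Bruhat order on $T$, $X$ is also an order ideal of $(T, \sqsubseteq)$. Because $S \subseteq X$, Theorem~\ref{corollario graduato} applies with $J := S \cap X = S$ and yields that $(W, \X)$ is graded by $\ell - \ell \circ Q^S = \ell$, the same rank function as $(W, \leqslant_L)$. In particular, the $i$-th rank level of $(S_n, \X)$ is $\{w \in S_n : \ell(w) = i\}$, independent of the choice of $X$.

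To finish, given any $h$ antichains $A_1, \ldots, A_h$ of $(S_n, \X)$, the first paragraph makes $\bigcup_{i=1}^h A_i$ a union of $h$ antichains of $(S_n, \leqslant_L)$, so the Gaetz--Gao theorem bounds its cardinality by the sum of the sizes of the $h$ largest rank levels of $(S_n, \leqslant_L)$; by the previous paragraph this equals the corresponding quantity in $(S_n, \X)$. Specializing to $X = T_k$, which has already been noted to be an order ideal of $(T, \sqsubseteq)$ containing $S$, yields the ``in particular'' assertion. I do not foresee a genuine obstacle: the argument is a two-step transfer, and the only care needed is to orient the antichain comparison correctly, since a refinement of a partial order restricts rather than enlarges its antichains.
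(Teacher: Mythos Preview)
Your proposal is correct and follows essentially the same approach as the paper: reduce to the Gaetz--Gao theorem via the observation that any antichain of $(S_n,\X)$ is an antichain of $(S_n,\leqslant_L)$, using that the rank levels coincide. You have simply made explicit the two ingredients the paper leaves implicit in the sentence preceding the corollary, namely that $\leqslant_L$ is a subrelation of $\X$ when $S\subseteq X$ and that both posets are graded by $\ell$.
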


The category of posets considered here is the one with posets as
objects and order-preserving functions as morphisms.  Let $(W,S)$ be a
Coxeter system and $J \subseteq S$. Then the function
$Q^J : W \rightarrow W$ is not, in general, an element of
$\End(W,\X)$. For example, in type $A_2$ with Coxeter generators
$\{s,t\}$, we have $ts \leqslant_L sts$ but
$Q^{\{t\}}(ts)=s \nleqslant_L st =Q^{\{t\}}(sts)$.  The second main
result of this article is that, in the same hypotheses as
Theorem~\ref{corollario graduato}, the
functions~${P^J : (W,\X) \rightarrow (W,\X)}$ are order preserving.

\begin{thm} \label{ordine preservato}
  Let $(W,S)$ be a Coxeter system, $J \subseteq S$ and $X \subseteq T$
  be an order ideal of $(T,\sqsubseteq)$. Then $P^J \in \End(W,\X)$.
\end{thm}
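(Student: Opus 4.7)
My plan is to reduce the theorem to covering relations in $(W,\leqslant_{L^X})$. By Theorem~\ref{corollario graduato}, any cover $u \vartriangleleft_X v$ has the form $v = tu$ with $t \in X$ and $\ell(v) - \ell(u) = 1$, so it suffices to verify $P^J(u) \leqslant_{L^X} P^J(v)$ in this situation. If $u^{-1}tu \in W_J$, then $v \in uW_J$, whence $P^J(u) = P^J(v)$ and there is nothing to prove. Assume henceforth $u^J \neq v^J$; the task becomes to construct a chain of $\leqslant_{L^X}$-covers from $u^J$ to $v^J$.

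The construction proceeds by induction on $\ell(u_J)$. For the inductive step, choose $s \in D_R(u) \cap J$ (nonempty since $u_J \neq e$). If $s \in D_R(v)$ as well, then $u' := us$ and $v' := vs$ satisfy $v' = tu'$ with $\ell(v')-\ell(u') = 1$, giving a cover $u' \vartriangleleft_X v'$ via the same $t$; since $P^J(u') = P^J(u)$, $P^J(v') = P^J(v)$, and $\ell(u'_J) < \ell(u_J)$, the induction hypothesis applies. If instead $s \notin D_R(v)$, I invoke the Lifting Property (Proposition~\ref{sollevamento}) inside the dihedral reflection subgroup $\langle t, s \rangle$, using Dyer's Theorem~\ref{teorema Dyer} to control the Bruhat graph structure on the cosets involved, to rearrange the cover and reduce to the previous subcase.

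The base case $u = u^J \in W^J$ is handled via reduced-expression analysis. Choose a reduced expression $v = s_{i_1}\cdots s_{i_n}$ adapted to the factorization $v = v^J v_J$, so that its first $p := \ell(v^J)$ letters spell out $v^J$. Since $\ell(tv) = \ell(u) = \ell(v) - 1$, the strong exchange property yields $u = s_{i_1} \cdots \widehat{s}_{i_j} \cdots s_{i_n}$ for some $j$; the hypotheses $u \in W^J$ and $u^J \neq v^J$ force $j \leqslant p$. This expresses $u$ as $y \cdot v_J$, with $y := s_{i_1} \cdots \widehat{s}_{i_j} \cdots s_{i_p}$ a Bruhat predecessor of $v^J$ via $t$; projecting further gives $u^J = y^J$, and iterating the main analysis on the pair $(y, v^J)$ of strictly smaller length produces the required chain.

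The principal obstacle is verifying that every covering reflection $r$ appearing along the chain lies in $X$. Since $X$ is a $\sqsubseteq$-order ideal containing $t$, this reduces to showing $r \sqsubseteq t$ in $(T,\sqsubseteq)$; this generalises to arbitrary reflections the key observation made in the proof of Theorem~\ref{corollario graduato} for simple reflections. I plan to establish the relation by placing $r$ and $t$ in a common dihedral reflection subgroup $W' = \langle r, t \rangle$ and exhibiting a directed path from $r$ to $t$ in the Bruhat graph $\Omega_{W', S'}$; Dyer's Theorem~\ref{teorema Dyer} supplies the required identification of $\Omega_{W', S'}$ with the induced subgraph of $\Omega_{W,S}$ on the relevant coset, making the dihedral analysis concrete and analogous to the one used for Theorem~\ref{corollario graduato}.
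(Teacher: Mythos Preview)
Your reduction to covers and the easy subcase ($s\in D_R(u)\cap D_R(v)$) are fine, and the base-case manoeuvre of passing to the pair $(y,v^J)=(tv^J,v^J)$ is correct once you set up a double induction (outer on $\ell(v)$, inner on $\ell(u_J)$); you should state that structure explicitly, since ``induction on $\ell(u_J)$'' alone does not justify the recursion to $(y,v^J)$.

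The genuine gap is the hard subcase $s\in D_R(u)\setminus D_R(v)$. Your plan is to ``invoke the Lifting Property inside the dihedral reflection subgroup $\langle t,s\rangle$'' and reduce to the previous subcase, but this does not work as written. The order $\X$ is governed by \emph{left} multiplication by reflections, while the descent $s\in D_R(u)$ and the projection $P^J$ concern \emph{right} multiplication; the left coset $\langle t,s\rangle\cdot u$ contains $u$, $v=tu$, $su$, \ldots, but not $us$ in general, so Dyer's Theorem~\ref{teorema Dyer} applied to $\langle t,s\rangle$ gives no control over $us$ or over $P^J$. Moreover the Lifting Property goes the wrong way here: with $u<v$ and $s\in D_R(u)\setminus D_R(v)$ it yields nothing beyond the trivial $us\leqslant v\leqslant vs$. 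There is no mechanism in your outline that produces a new cover with the same (or smaller) $t$ and strictly smaller $\ell(u_J)$.

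The paper handles exactly this case by a different idea that you are missing: observe that $u^J=us=t\cdot(vs)$ with $\ell(u^J)=\ell(vs)-3$, so directly $u^J\X vs$ via the single reflection $t\in X$; then Theorem~\ref{corollario graduato} refines this to a saturated chain $u^J\vartriangleleft_X w_1\vartriangleleft_X w_2\vartriangleleft_X vs$, and one finishes by analysing $w_2$ (using case~1 and the outer induction) to descend from $vs$ back to $v=v^J$. The crucial point is that gradedness of $(W,\X)$ does the work of producing intermediate covers in $X$, so one never needs to prove that each intermediate reflection is $\sqsubseteq t$. Your ``principal obstacle'' paragraph aims for this stronger conclusion, but you give no argument for it beyond the vague dihedral plan already criticised; even if it happens to be true, it is not needed once you route the argument through $vs$ and Theorem~\ref{corollario graduato}.
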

\begin{proof}
  We first prove the result in the case $|J|=1$. Let $J=\{s\}$ for
  some $s\in S$, and let $u \var_X v$. Then there exists $t\in X$ such
  that $u=tv$ and, by Theorem \ref{corollario graduato},
  $\ell(v)-\ell(u)=1$. We proceed by induction on $\ell(v)$. If
  $\ell(v)=1$ then $u=e$ and the result is obvious. Let
  $\ell(v)>1$. We consider two cases.
  
 \begin{enumerate}
 \item\label{case1} Suppose $vs<v$. If $u<us$ then $us=v$ and so $P^J(u)=u=P^J(v)$.
                    If $us<u$ then $us=tvs$ and $\ell(tvs)=\ell(vs)-1$; hence
                    $P^J(u)=us \var_X vs=P^J(v)$. 
                  \item Suppose now $v<vs$. If $u_J=e$ then
                    $P^J(u)=u \var_X v = P^J(v)$.  If instead $u_J=s$
                    then $u^J=tvs$ and $\ell(u^J)=\ell(vs)-3$. Hence
                    $u^J \X vs$.  By Theorem \ref{corollario graduato}
                    there exist $w_1,w_2\in W$ such that
                    $u^J \var_X w_1 \var_X w_2 \var_X vs$.  If
                    $w_2<w_2s$ then $w_2=v=v^J$ and the result
                    follows. So let $w_2s<w_2$. Since
                    $\ell(w_2)=\ell(v)$, by case~\ref{case1} above and
                    our inductive hypothesis
                    $u^J \X P^J(w_1) \X P^J(w_2)$. Moreover,
                    $P^J(w_2)=w_2s=(rvs)s=rv$, for some $r\in X$, and
                    $\ell(w_2s)=\ell(v)-1$. Hence
                    $u^J \X P^J(w_1) \X P^J(w_2) \var_X v$.
\end{enumerate}

Let $u\X v$ such that $\ell(u)<\ell(v)-1$. Then, by Theorem
\ref{corollario graduato} there exists a chain
$u \var_X w_1 \var_X \ldots \var_X w_n \var_X v$. Therefore
  $$P^J(u) \X P^J(w_1) \X \ldots \X P^J(w_n) \X P^J(v).$$

  Let now $|J|>1$ and $u \var_X v$. We proceed by induction on
  $\ell(v)$. If $\ell(v)=1$ then $u=e$ and the result is obvious. Let
  $\ell(v)>1$ and consider the following two cases.
  \begin{enumerate}
  \item Suppose $v_J>e$. Let $s\in D_R(v_J)$. If $u<us$ we have that
    $us=v$ and then $P^J(u)=P^J(us)=P^J(v)$.  If $us<u$, as before,
    $us \var_X vs$. By our inductive hypothesis
    $P^J(u)=P^J(us) \X P^J(vs)=P^J(v)$.
                  \item Suppose now $v_J=e$. If $u_J=e$, then
                    $P^J(u)=u \var_X v = P^J(v)$. If $u_J>e$, let
                    $s_1\cdots s_m$ be a reduced word for $u_J$. Then
                    $\{s_1,s_2,\ldots,s_m\} \subseteq J$ and the
                    result follows from the case $|J|=1$, since
$$u^J=P^{\{s_1\}}\cdots P^{\{s_m\}}(u^Ju_J) \X P^{\{s_1\}}\cdots P^{\{s_m\}}(v)=v. \qedhere$$\end{enumerate}
\end{proof}

By the same arguments as before, the previous result holds in
particular for the orders $(W,\<)$, thus proving the second theorem in
Section \ref{intro}.
\begin{oss}
  The previous result is known for the Bruhat order (see
  \cite[Proposition 2.5.1]{BB}); in that case it is a direct
  consequence of the lifting property. For $X=S$ the result, in its
  right version, is \cite[Lemma 2.1]{Denfant}.
\end{oss}

Recall that given a Coxeter system $(W,S)$ with Coxeter matrix $m$,
one can define an associated Coxeter monoid $W^m$ as
$$W^m=\left\langle s_i\in S : s_i^2=s_i, \underbrace{s_i s_j s_i \dots}_{m(s_i,s_j) \text{ terms}} =  \underbrace{s_j s_i s_j \dots }_{m(s_i,s_j) \text{ terms}} \right\rangle,$$ see also \cite{Kenney}. It is known that the Coxeter monoid $W^m$ is a submonoid
of the monoid $M$ generated by the functions $\{P^J:J \subseteq S\}$;
see e.g.~\cite[Section 4]{Sentinelli-Artin} for details.  By Theorem
\ref{ordine preservato}, $M$ is a submonoid of $\End(W,\<)$. Hence we
have the following corollary.
\begin{cor}
Let $k\in \N$. Then, as monoids,  $W^m \hookrightarrow \End(W,\<)$.
\end{cor}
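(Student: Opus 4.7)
The plan is to chain two embeddings already present in (or cited by) the paper. First, I would observe that the hypotheses of Theorem~\ref{ordine preservato} apply with $X := T_k$: indeed $T_k$ is an order ideal of $(T,\leqslant)$ by its very definition, and hence also an order ideal of $(T,\sqsubseteq)$, exactly as noted immediately before Corollary~\ref{cor:intermediate}. Consequently, for every $J\subseteq S$ the map $P^J$ lies in $\End(W,\<)$.

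Next, since $\End(W,\<)$ is a monoid under composition, the submonoid $M \subseteq \operatorname{End}(\mathrm{Set})$ generated by the family $\{P^J : J \subseteq S\}$ is automatically realized inside $\End(W,\<)$. In other words, Theorem~\ref{ordine preservato} upgrades the inclusion $\{P^J\}_{J\subseteq S}\subseteq \End(W,\<)$ to a monoid embedding $M\hookrightarrow \End(W,\<)$.

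Finally, I would invoke the fact recalled just before the statement: the Coxeter monoid $W^m$ embeds into $M$ (the embedding sends each Coxeter generator $s\in S$ to the projection $P^{\{s\}}$; cf.\ \cite[Section 4]{Sentinelli-Artin}). Composing the two embeddings
\[
W^m \hookrightarrow M \hookrightarrow \End(W,\<)
\]
yields the claimed monoid embedding.

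There is no genuine obstacle at the level of this corollary: all the work has already been done in Theorem~\ref{ordine preservato} (which provides the one nontrivial ingredient, namely that every $P^J$ preserves $\<$) and in the cited embedding $W^m\hookrightarrow M$. The only point that warrants a sentence of justification in a written-out proof is the verification that $T_k$ satisfies the hypothesis of Theorem~\ref{ordine preservato}, and this has in fact already been observed in the paper.
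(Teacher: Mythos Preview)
Your proposal is correct and matches the paper's own argument essentially line for line: the paper notes just before the corollary that $W^m$ embeds in the monoid $M$ generated by the $P^J$, and that Theorem~\ref{ordine preservato} forces $M\subseteq\End(W,\<)$, so the corollary follows by composing these two inclusions. Your only addition is the explicit remark that $T_k$ is an order ideal of $(T,\sqsubseteq)$, which the paper had already recorded before Corollary~\ref{cor:intermediate}.
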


Let $k\in \N$, $(W,\{s_1,\ldots,s_n\})$ be a Coxeter system and
let
$$\phi_k : (W,\leqslant_{L^k}) \rightarrow
(W^{S\setminus\{s_1\}},\leqslant_{L^k}) \times \ldots \times
(W^{S\setminus\{s_n\}},\leqslant_{L^k})$$ be the function defined by
$\phi_k(w)=(P^{S\setminus\{s_1\}}(w),\ldots,P^{S\setminus\{s_n\}}(w))$,
for all $w\in W$.  By Theorem \ref{ordine preservato} the function
$\phi_k$ is order preserving. Given a function
$f : (A,\leqslant_A) \rightarrow (B,\leqslant_B)$, we define an
induced subposet of $B$ by
$\mathrm{Im}(f):=\left( \{f(a):a\in A\}, \leqslant_B \right)$.

\begin{prop}
  Let $W=S_n$ with its standard Coxeter presentation. Then
  $\mathrm{Im}(\phi_k)\simeq \left(S_n,\leqslant\right)$, for all
  $k\in \N$.
\end{prop}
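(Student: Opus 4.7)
The plan is to exhibit $\phi_k$ as a bijection from $S_n$ onto $\mathrm{Im}(\phi_k)$ and to verify that, under this bijection, the partial order on the image inherited from the product coincides with the Bruhat order on $S_n$.

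First I would check that $\phi_k$ is injective. In $S_n$ the coset $wW_{S\setminus\{s_i\}}$ is determined by the unordered set $\{w(1),\ldots,w(i)\}\subseteq[n]$, and $P^{S\setminus\{s_i\}}(w)$ is its unique minimal-length representative. The tuple $\phi_k(w)$ therefore encodes the complete flag $\{w(1)\}\subset\{w(1),w(2)\}\subset\cdots\subset[n]$, from which $w$ is uniquely recoverable.

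The heart of the argument is a lemma asserting that on each parabolic quotient $W^{J_i}$, with $J_i:=S\setminus\{s_i\}$, the orders $\leqslant_L$, $\<$ and $\leqslant$ all coincide. To prove it I would use that elements of $W^{J_i}$ are Grassmannian permutations, completely encoded by the $i$-subset $A(u):=\{u(1),\ldots,u(i)\}$, and observe that a Bruhat cover in $W^{J_i}$ amounts to replacing some $a\in A(u)$ by $a+1\notin A(u)$; this is precisely left multiplication by the simple reflection $s_a$ and raises the length by one. Hence every Bruhat cover between Grassmannians is already a left weak order cover, and combined with the chain of inclusions $\leqslant_L\;\subseteq\;\<\;\subseteq\;\leqslant$ and the fact that all three orders are graded by $\ell$ (Corollary~\ref{cor:intermediate}), the three orders must agree on $W^{J_i}$.

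Finally I would invoke the tableau criterion for Bruhat order on $S_n$ (see \cite[Theorem~2.6.3]{BB}): for $u,v\in S_n$ one has $u\leqslant v$ if and only if $P^{J_i}(u)\leqslant P^{J_i}(v)$ for all $i\in[n-1]$. Combined with the key lemma, this shows that the induced order on $\mathrm{Im}(\phi_k)$ is exactly componentwise Bruhat on the Grassmannian quotients, and that $\phi_k$ is both order preserving (already known from Theorem~\ref{ordine preservato}) and order reflecting under this identification, hence an isomorphism onto its image. The main technical obstacle is the lemma of the previous paragraph: one must pin down the covers of Bruhat restricted to Grassmannian permutations and verify that they are realised by simple reflections, so that every intermediate order on $W^{J_i}$ collapses to the Bruhat order.
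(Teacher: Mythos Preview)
Your proposal is correct and follows essentially the same route as the paper: the paper first observes that on each Grassmannian quotient $S_n^{S\setminus\{s_h\}}$ the left weak order and the Bruhat order coincide (citing \cite[Exercise~3.2]{BB}, which is precisely your key lemma), so the product order is independent of $k$, and then invokes the componentwise characterisation of Bruhat order \cite[Theorem~2.6.1]{BB} to conclude. The only difference is that you spell out the proof of the Grassmannian lemma and the injectivity of $\phi_k$ explicitly, whereas the paper cites them.
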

\begin{proof}
 It suffices to prove the result for $k=0$. By \cite[Exercise 3.2]{BB}, 
 $$(S_n^{(1)},\leqslant_L) \times \ldots \times (S_n^{(n-1)},\leqslant_L)=(S_n^{(1)},\leqslant) \times \ldots \times (S_n^{(n-1)},\leqslant),$$ where, for $h\in [n-1]$, we have defined $S^{(h)}_n:=S_n^{S\setminus \{s_h\}}$.
 Since $u \leqslant v$ if and only if
 $P^{S\setminus \{s_h\}}(u) \leqslant P^{S\setminus \{s_h\}}(v)$ for
 all $h\in [n-1]$ (see, e.g. \cite[Theorem 2.6.1]{BB}), the result
 follows.
\end{proof}

The previous result could be not true for other Coxeter groups. For
example, in type $B_3$, the poset $\mathrm{Im}(\phi_0)$ is not graded.

\section{The $k$-absolute orders and open problems}\label{sec:pbs}

In this section we define $k$-analogues of Bruhat graphs, absolute
orders and absolute length. We then formulate a few related
conjectures and open problems. These definitions could be extended to
orders $\leqslant_{L^{X}}$ where $X$ is an order ideal of $T$
containing $S$, but the conjectures are not stated in that extended
setting since they are inadequately tested in that generality.

Let $(W,S)$ be a Coxeter system, $k\geqslant 0$ and $\Omega^k$ the
directed graph whose vertex set is $W$ and such that there is an arrow
from $a$ to $b$ if and only if $ab^{-1}\in T_k$ and $a<b$. We call
$\Omega^k$ the $k$-\emph{Bruhat graph} of $(W,S)$. Let
$\vec{d}_k(a,b)$ be the distance from $a$ to $b$ in the directed graph
$\Omega^k$. We define the $k$-\emph{absolute length} of $w\in W$ by
$$\ell_k(w):=\vec{d}_k(e,w).$$ Clearly $\ell_0=\ell$
and, in the finite case for $k$ big enough, $\ell_k=a\ell$, where
$a\ell$ is the absolute length (see, for instance, \cite[Exercise
7.2]{BB} and references therein).

\begin{dfn} \label{def k-assoluto} Let $k\in \N$.  The left
  $k$-\emph{absolute order} $\preccurlyeq_k$ on $W$ is the partial
  order defined by letting $u \preccurlyeq_k v$ if and only if
  $\ell_k(v)=\ell_k(u)+\ell_k(vu^{-1})$, for all $u,v\in W$.
\end{dfn} By definition, the poset $(W,\preccurlyeq_k)$ is ranked with
rank function $\ell_k$. For $k=0$ we recover the left weak order; in
the finite case, for $k$ big enough we obtain the \emph{absolute
  order} $\preccurlyeq$ first introduced in \cite{brady}.
\begin{oss}
  Notice that the maximal chains of $(W,\preccurlyeq_k)$ could have
  different lengths. For example, if $k=1$ and $W=S_4$ with its
  standard Coxeter presentation, then
  $\max_{\preccurlyeq_k}W=\{2413, 3142, 4321\}$,
  $\ell_1(2413)=\ell_1(s_1s_3s_2)=3$,
  $\ell_1(3142)=\ell_1(s_2s_3s_1)=3$ and
  $\ell_1(4321)=\ell_1(s_1s_2s_3t)=4$, where $t:=s_1s_2s_1$.
\end{oss}

We denote by $[u,v]_a$ an interval in the absolute order and by
$[u,v]_{ak}$ an interval in the $k$-absolute order. Brady proved in
\cite{brady} that, in type $A_n$, if $c$ is any Coxeter element then
the interval $[e,c]_a$ is isomorphic to the lattice of noncrossing
partitions. In any finite type, the intervals $[e,c]_a$ have been
proved to be shellable in \cite{Athanasiadis}. In this vein, we
formulate the following conjecture for $k$-intermediate orders and
$k$-absolute orders.
\begin{conge} \label{congettura shellable} Let $c$ be a Coxeter
  element of a Coxeter system $(W,S)$. Then the order complexes of the
  intervals $[e,c]_k$ and $[e,c]_{ak}$ are shellable, for all
  $k\in \N$.
\end{conge}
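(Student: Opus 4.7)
The plan is to attack the two halves of the conjecture separately, using edge-labeling techniques adapted to each setting. For the interval $[e,c]_k$ in the $k$-intermediate order, Corollary~\ref{cor:intermediate} guarantees purity (every maximal chain has length $\ell(c)$), so I would aim for a genuine EL-labeling in the sense of Bj\"orner. The natural candidate is to label each cover $u \var_{T_k} v$ by the reflection $t=vu^{-1}\in T_k$ and to seek a total order on $T_k$ such that every subinterval admits a unique increasing maximal chain, which is moreover lexicographically smallest. For the full Bruhat order, Dyer's reflection orders associated with reduced expressions of $c$ or of the longest element yield such labelings; my first attempt would be to fix such a reflection order, restrict it to $T_k$, and check the EL-axioms. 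Because chains must remain inside $[e,c]$ and use only reflections of bounded length, the standard lifting argument underlying Bruhat EL-shellability does not apply verbatim; one would need to rebuild the relevant exchange property using the dihedral analysis that appears in the proof of Theorem~\ref{corollario graduato}.

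For the interval $[e,c]_{ak}$ in the $k$-absolute order, additivity of $\ell_k$ along covers makes this interval pure as well, so EL-labeling remains viable; if purity were to fail in some ambient chain structure, one would fall back on the Bj\"orner--Wachs theory of non-pure shellability via a CL-labeling or a recursive atom ordering. In the classical limit (finite type and $k$ large), shellability of $[e,c]_a$ was established via reflection orderings tied to the reflection representation of $c$. The plan is to mimic these labelings restricted to $T_k$, with induction on $k$ as the scaffold: at $k=0$ the $k$-absolute order coincides with the left weak order, whose intervals are shellable; when passing from $T_k$ to $T_{k+1}$ one extends the labeling by inserting the new reflections in a prescribed position and verifies that no new descent violations are created along chains contained in $[e,c]_{a,k+1}$.

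The main obstacle, in both cases, is controlling how the length restriction $\ell(t)\leqslant 2k+1$ interacts with the exchange properties driving the classical arguments. Two strategies seem promising: first, reduce to dihedral intervals, where shellability is straightforward, by inducting on the rank of the reflection subgroup generated by the labels along a chain, exploiting the dihedral mechanism that underlies $\sqsubseteq$; second, exploit Coxeter-element-specific structure such as the bipartite factorization of $c$ or its noncrossing partition interpretation, which is presumably what makes these intervals special, since shellability typically fails under naive truncation of general posets. I expect the genuinely new work to lie in the $k$-intermediate case, as the $k$-absolute case can plausibly inherit most of the existing Athanasiadis--Kallipoliti framework once the labeling has been adapted, whereas $k$-intermediate orders mix weak-order and Bruhat-order features in a way that has no direct classical analogue.
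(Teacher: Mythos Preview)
The statement you are addressing is a \emph{conjecture} in the paper, not a theorem: the authors offer no proof and do not claim one. Their only evidence is the observation that the case $k$ large (the full Bruhat order) is classical, together with SageMath verification of $[e,c]_k$ in types $A_n$ ($n\leqslant 5$), $B_n$ ($n\leqslant 5$), $D_4$, $D_5$, $F_4$, $H_3$, $H_4$, and of $[e,c]_{ak}$ in types $A_n$ ($n\leqslant 5$), $B_n$ ($n\leqslant 4$), $D_4$, $H_3$. There is therefore no ``paper's own proof'' to compare against.

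What you have written is, correspondingly, not a proof but a research programme, and you are candid about this (``one would need to rebuild the relevant exchange property'', ``the main obstacle \ldots is controlling how the length restriction interacts with the exchange properties''). The gaps you flag are exactly the substantive ones. Restricting a reflection order to $T_k$ need not yield an EL-labeling: the unique increasing chain in the full Bruhat interval may well use a reflection outside $T_k$, and there is no a priori reason the dihedral replacement mechanism from Theorem~\ref{corollario graduato} repairs this while preserving lexicographic minimality. Likewise, the inductive step ``insert the new reflections of $T_{k+1}\setminus T_k$ in a prescribed position and verify no new descent violations'' is precisely the hard part and is not carried out. Your instinct that the Coxeter-element hypothesis must be used essentially (via a bipartite factorization or a noncrossing-type model) is sound, since restricting Bruhat or absolute orders to bounded-length reflections will certainly destroy shellability for generic intervals; but nothing in the proposal actually exploits $c$ beyond invoking the Athanasiadis--Brady--Watt framework by name. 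As it stands the proposal is a reasonable outline of where to look, but it does not close the conjecture, and the paper does not either.
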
 Since Bruhat intervals are shellable, the conjecture is
true for Bruhat intervals $[e,c]$.  By SageMath computations, we have
verified the conjecture for the intervals $[e,c]_k$ in type $A_n$, for
all $1 \leqslant n\leqslant 5$, in type $B_n$, for all
$2 \leqslant n\leqslant 5$, in types $D_4$, $D_5$, $F_4$, $H_3$ and
$H_4$, for all $k\in \N$.  We have verified the conjecture for the
intervals $[e,c]_{ak}$ in type $A_n$, for all
$1 \leqslant n\leqslant 5$, in type $B_n$, for all
$2 \leqslant n\leqslant 4$, in types $D_4$ and $H_3$, for all
$k\in \N$.

Another feature of an absolute order is its strong Sperner property.
It has been proved in \cite{harper}, using flow techniques on Hasse
diagrams, that the poset $(S_n,\preccurlyeq)$ is strongly
Sperner. This result has been stated in \cite{sperner2} for finite
Coxeter groups, except for type $D_n$. Hence we can propose the
following problem for finite Coxeter groups.

\begin{prob}
Study the Sperner properties of $(W,\preccurlyeq_k)$.
\end{prob}
In the previous section we mentioned that $(S_n,\preccurlyeq_0)$ is
strongly Sperner and from this fact we could deduce Corollary
\ref{corollario sperner}. For other finite Coxeter groups and $k=0$
the strong Sperner property is expected in
\cite[Conjecture~3.1]{sperner}.

\vspace{.5em}

We now consider the distribution of $\ell_k$ on any Coxeter group. 

\begin{ex}
  The generating function of $\ell_1$ on $S_4$ is
  $$\sum_{w\in S_4}x^{\ell_1(w)}=1+5x+10x^2+7x^3+x^4.$$
\end{ex}

\begin{oss}
  The coefficient of $x$ in $\sum_{w\in W}x^{\ell_k(w)}$ is the number
  of reflections in $T_k$. For $W=S_n$, this turns out to be
  $|T_k|=\binom{n}{2}-\binom{n-k-1}{2}$. Indeed, it is easy to see
  that the number of transpositions in $T\setminus T_k$, that is the
  number of transpositions with length strictly greater than $2k+1$,
  is $\binom{n-k-1}{2}$.
\end{oss}

For definitions and results about log-concavity and unimodality we
refer to \cite{Stanley-log} and \cite{Brenti-log}.  We put forward two
conjectures about the generating functions of the $k$-absolute length
on finite Coxeter systems. The first is as follows.
\begin{conge} \label{congettura 1} Let $(W,S)$ be a finite Coxeter
  system and $k\geqslant 0$. Then the polynomial
  $\sum_{w\in W}x^{\ell_k(w)}$ is log-concave with no internal zeros.
\end{conge}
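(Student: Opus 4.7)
The plan is to dispatch the two extreme cases via known product formulas and then tackle the intermediate regime by combinatorial methods, since real-rootedness demonstrably fails there.

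At $k=0$ we have $\ell_0=\ell$ and the generating function is the Poincar\'e polynomial $\prod_{i=1}^n[d_i]_x$, where the $d_i$ are the degrees of $W$ and $[d]_x=1+x+\cdots+x^{d-1}$; that this product of rectangular $q$-integers is log-concave with no internal zeros is classical. When $k$ is large enough that $T_k=T$, the Shephard--Todd--Solomon formula gives
\[
\sum_{w\in W}x^{\ell_k(w)}=\prod_{i=1}^{n}\bigl(1+(d_i-1)x\bigr),
\]
a product of linear polynomials with nonnegative coefficients; hence it is real-rooted and strictly log-concave with no internal zeros by Newton's inequalities. A direct calculation in the example from the text already exposes a difficulty for intermediate $k$: in $S_4$ at $k=1$, the polynomial $1+5x+10x^2+7x^3+x^4$ factors as $(x+1)(x^3+6x^2+4x+1)$ with the cubic having discriminant $-139<0$, so it is not real-rooted and Newton-type methods do not extend.

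For intermediate $k$ I would attempt a combinatorial or structural proof. One candidate is to show that $\sum_{w\in W}x^{\ell_k(w)}$ is a Lorentzian polynomial in the sense of Br\"and\'en--Huh, which would imply log-concavity with no internal zeros; the $k$-Bruhat graph $\Omega^k$ provides a natural candidate exchange relation from which a matroidal structure might be extracted. A second approach is a direct injection $\{w:\ell_k(w)=h\}\times\{w:\ell_k(w)=h\}\hookrightarrow\{w:\ell_k(w)=h-1\}\times\{w:\ell_k(w)=h+1\}$, built by moving each pair of elements along edges of $\Omega^k$ to adjacent levels in a coordinated way. Both strategies pair naturally with induction on rank via parabolic cosets $W=\biguplus_{w\in{^JW}}W_Jw$, where Theorem \ref{ordine preservato} ensures that $P^J$ is compatible with $\leqslant_{L^k}$ and hence transfers rank-level information between parabolic factors and the full group.

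The main obstacle is that $\ell_k$ is not additive across parabolic decompositions: a reflection in $T_k$ of length up to $2k+1$ can jump between different $W_J$-cosets in a single step, so $\ell_k(w^Jw_J)\neq\ell_k(w^J)+\ell_k(w_J)$ in general, which obstructs a naive rank induction. A sensible preliminary step would be to establish the weaker statement of unimodality first, perhaps by leveraging the strong Sperner property envisaged in Section \ref{sec:pbs}, thereby separating the structural unimodality (which already looks accessible) from the genuinely log-concave content that constitutes the core of the conjecture.
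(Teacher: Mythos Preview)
The statement you are attempting is a \emph{conjecture}: the paper does not prove it in general. The paper's own treatment consists exactly of what you do in your first paragraph---the two extreme cases $k=0$ and $T_k=T$ via the classical product formulas---together with an explicit verification for dihedral groups and computer checks in small rank. So on the part that is actually established, your argument and the paper's coincide.

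Everything after your first paragraph is not a proof but a research plan, and as such does not close the gap. Two concrete issues: first, your proposed injection is backwards. Log-concavity is $a_h^2\geqslant a_{h-1}a_{h+1}$, so you would need an injection
\[
\{w:\ell_k(w)=h-1\}\times\{w:\ell_k(w)=h+1\}\hookrightarrow\{w:\ell_k(w)=h\}\times\{w:\ell_k(w)=h\},
\]
not the map you wrote, which would instead yield log-\emph{convexity}. Second, your appeal to Theorem~\ref{ordine preservato} is misplaced: that result concerns the intermediate order $\leqslant_{L^k}$, which is graded by the Coxeter length $\ell$, whereas the polynomial here is governed by the $k$-absolute length $\ell_k$, the rank function of the $k$-absolute order $\preccurlyeq_k$. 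The projection $P^J$ being order preserving for $\leqslant_{L^k}$ tells you nothing directly about the level sets of $\ell_k$. Your own observation that $\ell_k$ is not additive across parabolic factorizations is precisely why this transfer fails.

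In short: you have correctly reproduced the known cases, correctly identified that real-rootedness is unavailable, and then outlined plausible but unexecuted strategies. The conjecture remains open, both in the paper and in your proposal.
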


By \cite[Proposition 2]{Stanley-log} and the known factorization of the 
polynomials $\sum_{w\in W}x^{\ell(w)}$ and $\sum_{w\in W}x^{a\ell(w)}$
(see, e.g. \cite[Theorem 7.1.5]{BB} and \cite[Exercise 7.2]{BB}),
the previous conjecture holds in these two extreme cases. 

For dihedral groups $I_2(m)$, the statement of Conjecture
\ref{congettura 1} holds. Indeed, the following formula can be
directly verified.  Let $h\in \N\setminus \{0\}$ and define the
function $\pi_h: \N \rightarrow \N$ by
$\pi_h(n)=n-h\lfloor n/h \rfloor$, for all $n \in \N$.  If
$1 \leqslant 2k+1\leqslant m$, then
 $$\sum\limits_{w\in I_2(m)}x^{\ell_k(w)} =1+2(k+1)x+2(2k+1)\sum\limits_{i=2}^{\left\lfloor\frac{m-1}{2k+1} \right\rfloor}x^i+a_{k,m}x^{\left\lfloor\frac{m-1}{2k+1} \right\rfloor+1} +b_{k,m}x^{\left\lfloor\frac{m-1}{2k+1} \right\rfloor+2},$$
 where
 $$a_{k,m}:= 2k+\pi_{2k+1}(m)+\left\{
                 \begin{array}{ll}
                   2k+1, & \hbox{if $\pi_{2k+1}(m)=0$;} \\
                   0, & \hbox{otherwise,}
                 \end{array}
               \right.$$ and
  $$b_{k,m}:= \left\{
                 \begin{array}{ll}
                   2k, & \hbox{if $\pi_{2k+1}(m)=0$;} \\
                   \pi_{2k+1}(m)-1, & \hbox{otherwise.}
                 \end{array}
               \right.$$

Explicit computations carried out with SageMath \cite{SageMath}
confirm Conjecture~\ref{congettura 1} for all relevant~$k$ for types
$A_n$ ($n\leq 5$), $B_n$ ($n\leq 4$), $D_n$ ($n\leq 5$), $F_4$ and
$H_3$.  Since a non-negative log-concave sequence with no internal
zeros is unimodal, the previous conjecture implies the following.

\begin{conge}
  Let $(W,S)$ be a finite Coxeter system and $k\geqslant 0$. Then the
  polynomial $\sum_{w\in W}x^{\ell_k(w)}$ is unimodal.
\end{conge}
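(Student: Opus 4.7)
The plan is to deduce unimodality directly from Conjecture \ref{congettura 1}, since any nonnegative log-concave sequence with no internal zeros is unimodal (cf.\ \cite{Stanley-log}). Hence the essential task is to prove the log-concavity statement, and I would attempt that as follows.

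The two extremal cases are already settled, as noted in the paper just before the conjecture. For $k=0$, $\ell_0=\ell$ and the Poincar\'e polynomial factors as $\prod_i(1+x+\cdots+x^{e_i})$; for $k$ at least $\max\{(\ell(t)-1)/2:t\in T\}$, we have $\ell_k=a\ell$ and the generating function factors as $\prod_i(1+e_ix)$, with $e_1,\ldots,e_n$ the exponents of $W$. In both cases each factor is a log-concave polynomial with no internal zeros, so the product is log-concave by \cite[Proposition~2]{Stanley-log}. The dihedral case $W=I_2(m)$ is dispatched by the explicit closed form displayed in the paper, which one checks term by term.

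For intermediate $k$ in higher rank, I would try to reduce to reflection subgroups. By Theorem \ref{teorema Dyer}, for every coset of the reflection subgroup $W'$ generated by $T_k$, the induced $k$-Bruhat graph is isomorphic, via a suitable choice of base point, to the $k$-Bruhat graph of $W'$ itself. This suggests writing
\[\sum_{w\in W}x^{\ell_k(w)}=\sum_{c}x^{\ell_k(c)}\,p_c(x),\]
where $c$ ranges over representatives of $W'$-cosets chosen to minimise the $k$-distance from $e$, and $p_c(x)$ is the $\ell_k$-generating function of $W'$ acting on the coset. If one can show by induction on rank that each $p_c$ is log-concave with no internal zeros and that the shifts $\ell_k(c)$ align compatibly across cosets, then log-concavity of the sum would follow from a further application of \cite[Proposition~2]{Stanley-log}.

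The main obstacle, and the reason the statement is still only a conjecture, is that at intermediate $k$ there is no known product formula for $\sum_{w\in W}x^{\ell_k(w)}$, so Stanley's closure result cannot be invoked directly. Compounding this, the paper's remark on $S_4$ at $k=1$ shows that maximal elements of $(W,\preccurlyeq_k)$ can have distinct $\ell_k$-values, so even the support of the polynomial is not a priori a clean interval and a naive rank-by-rank induction is unavailable. I therefore expect the final proof to proceed type by type: for classical types $A$, $B$, $D$, exploit an explicit combinatorial description of $\ell_k$ in terms of factorisations into reflections of length at most $2k+1$ and prove log-concavity by an injection between consecutive rank levels up to the mode; the exceptional types $E_6$, $E_7$, $E_8$, $F_4$, $H_4$ would then be settled computationally, extending the SageMath verifications already reported by the authors.
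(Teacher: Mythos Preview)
The statement in question is a \emph{conjecture} in the paper, not a theorem; the paper offers no proof. The only argument the paper gives is exactly your first sentence: unimodality would follow from Conjecture~\ref{congettura 1} because a nonnegative log-concave sequence with no internal zeros is unimodal. On that point you and the paper agree, and there is nothing further to compare.

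Everything after your first paragraph is an attempt to prove Conjecture~\ref{congettura 1} itself, which the paper does not do and which remains open. Your proposed reduction via reflection subgroups has a concrete flaw: since $S\subseteq T_k$ for every $k\geqslant 0$, the reflection subgroup $W'$ generated by $T_k$ is all of $W$, so the coset decomposition you describe is trivial (a single coset) and Theorem~\ref{teorema Dyer} gives no leverage. The remaining plan---type-by-type injections for classical types plus computer checks for exceptional types---is a reasonable research outline but is not a proof: no injection is specified, and the exceptional verifications you cite from the paper were carried out only up to rank~$5$ (and not for $E_6$, $E_7$, $E_8$). In short, your reduction to log-concavity matches the paper's remark, but neither you nor the paper supplies a proof of the log-concavity statement, and your sketched strategy for it does not go through as written.
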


We end this section with a problem on the Ricci curvature of
$k$-Bruhat graphs. The Ricci curvature of the graph $\Omega^0$ (the
Cayley graph of $(W,S)$) is studied (and in many cases explicitly
computed) in \cite{Siconolfi-1}; the Ricci curvature of the Bruhat
graph of a finite Coxeter group is proved to be $2$ in
\cite{Siconolfi-2}. We refer to these articles for definitions and
preliminary results.  Note that a $k$-Bruhat graph is always
locally-finite and is a subgraph of the Bruhat graph; it is then
natural to formulate the following problem.

\begin{prob}
Let $(W,S)$ be a Coxeter system and $k\geqslant 0$. 
What can be said about the Ricci curvature of the $k$-Bruhat graph?
\end{prob}

\section*{Acknowledgements}
 The third author thanks the School of Mathematical and Statistical
 Sciences of the National University of Ireland in Galway, for its
 hospitality during summer 2019.  The first author was partially supported by an
 Irish Research Council postdoctoral fellowship (grant no.\
 GOIPD/2018/319).

\bibliographystyle{abbrv}
\bibliography{kBruhat.bib}
\end{document}